\newtheorem{theorem}{Theorem}
\newtheorem{corollary}{Corollary}
\newtheorem{definition}{Definition}
\newtheorem{example}{Example}
\newtheorem{lemma}{Lemma}
\newtheorem{proposition}{Proposition}
\newtheorem{remark}{Remark}
\renewenvironment{proof}{\par\noindent\textbf{Proof.}}{\hfill $\blacksquare$\par\smallskip}
\newcommand{\G}{G}
\newcommand{\g}{g}
\newcommand{\h}{h}
\newcommand{\C}{\mathscr{C}}
\renewcommand{\O}{\mathcal{O}}
\newcommand{\Z}{\mathscr{Z}}
\newcommand{\set}[1]{\left\{#1\right\}}
\newcommand{\coref}[1]{Section \S\ref{#1},}% \nameref{#1},
\newcommand{\Hom}{\mathrm{Hom}}
\newcommand{\End}{\mathrm{End}}
\newcommand{\Gr}{\mathrm{Gr}}
\newcommand{\op}{\mathrm{op}}
\newcommand{\titlename}{Graded Morita theory over a $G$-graded $G$-acted algebra}
\newcommand{\authorname}      {Virgilius-Aurelian Minu\cb{t}\u{a}}
\newcommand{\shortauthorname} {V. A. Minu\cb{t}\u{a}}
\newcommand{\universityname}  {Babe\cb{s}-Bolyai University of Cluj-Napoca}
\newcommand{\facultyname}     {Faculty of Mathematics and Computer Science}
\newcommand{\departmentname}  {Department of Mathematics}
\renewcommand{\email}         {minuta.aurelian@math.ubbcluj.ro}
\newcommand{\articleabstract}{We develop a group graded Morita theory over a $G$-graded $G$-acted algebra, where $G$ is a finite group.}
\newcommand{\msc}{16W50, 16D20, 16D90, 16S35.}
\renewcommand{\keywords}{Crossed product, group graded Morita equivalence, centralizer subalgebra, graded rings and modules.}
\titleformat{\section}{\Large\bfseries}{\thesection}{1em}{}
\title[\titlename]{\LARGE{\titlename}}
\newcommand{\institution}{
\universityname\\
\facultyname\\
\departmentname}
\author[\shortauthorname]{\Large{\authorname}
\medskip\\
{\footnotesize \institution\\
email: \texttt{\email}
}}
\begin{document}
\begin{abstract}
\articleabstract\\[0.1cm]
\textsc{MSC 2010.} \msc\\[0.1cm]
\textsc{Key words.} \keywords
\end{abstract}
\begingroup
\def\uppercasenonmath#1{} %disables uppercase of title
\let\MakeUppercase\relax %disables uppercase of authors
\maketitle
\endgroup
\thispagestyle{firstpagestyle}

%%%%%%%%%%%%%%%%%%%%%%%%%%%%%%%%%%%%%%%%%%%%%%%%%%%%%%%%%
%%%%%%%%%%%%%%%%%%%%Start of document%%%%%%%%%%%%%%%%%%%%
%%%%%%%%%%%%%%%%%%%%%%%%%%%%%%%%%%%%%%%%%%%%%%%%%%%%%%%%%
\section{Introduction} \label{s:Introduction}
Let $G$ be a finite group. In this article we develop a group graded Morita theory over a $G$-graded $G$-acted algebra, which is motivated by the problem to give a group graded Morita equivalences version of relations between character triples (see \cite{ch:Spath2018}) as in \cite{MM2}. We will follow, in the development of graded Morita theory over a $G$-graded $G$-acted algebra, the treatment of Morita theory given by C. Faith in 1973 in \cite{book:Faith1973}. Significant in this article is the already developed graded Morita theory. Graded Morita theory started in 1980 when R. Gordon and E. L. Green have characterized graded equivalences in the case of $G=\mathbb{Z}$, in  \cite{article:GG1980}. Furthermore, in 1988 it was observed to work for arbitrary groups $G$ by C. Menini and C. N\u{a}st\u{a}sescu, in \cite{article:MN1988}. We will make use of their results under the form given by A. del R\'io in 1991 in \cite{article:delRio1991} and we will also use the graded Morita theory developed by P. Boisen in 1994 in \cite{article:Boisen1994}.
\medskip
\par This paper is organized as follows: In \coref{s:Notations} we introduce the general notations. In \coref{s:graded_bimodules_over_C} we recall from \cite{MM2} the definition of a $G$-graded $G$-acted algebra, we fix one to which we will further refer to by $\C$, we recall the definition of a $G$-graded algebra over said $\C$, and we recall the definition of a graded bimodule over $\C$. Moreover, we will give some new examples, useful for this article, for each notion. In \coref{s:graded_equivalence_data_over_C} we construct the notion of a $G$-graded Morita context over $\C$ and we will give an appropriate example. In \coref{s:Morita_theorems} we introduce the notions of graded functors over $\C$ and of graded Morita equivalences over $\C$ and finally we state and prove two Morita-type theorems using the said notions.

\section{Notations and preliminaries} \label{s:Notations}
Throughout this article, we will consider a finite group $G$. We shall denote its identity by $1$.
\medskip
\par All rings in this paper are associative with identity $1\neq 0$ and all modules are unital and finitely generated. We consider $\O$ to be a commutative ring.
\medskip
\par Let $A$ be a ring. We denote by $A\text{-Mod}$ the category of all left $A$-modules. We shall usually write actions on the left, so in particular, by module we will usually mean a left module, unless otherwise stated.  The notation $\tensor*[_A]{M}{}$ (respectively, $\tensor*[_A]{M}{_{A'}}$) will be used to emphasize that $M$ is a left $A$-module (respectively, an $(A,A')$-bimodule).
\medskip
\par
Let $A=\bigoplus_{\g\in\G} A_{\g}$ be a $\G$-graded $\O$-algebra. We denote by $A\text{-}\Gr$ the cate\-gory of all $\G$-graded left $A$-modules. The forgetful functor from $A\text{-}\Gr$ to $A\text{-Mod}$ will be denoted by $U$. For $M=\bigoplus_{\g\in\G} M_{\g}\in A\text{-}\Gr$ and $\g\in \G$, the $\g$-suspension of $M$ is defined to be the $\G$-graded $A$-module $M(\g)=\oplus_{\h\in\G}M(\g)_{\h}$, where $M(g)_{\h}=M_{\g\h}$. For any $g\in G$, $T_g^A : A\text{-}\Gr\to A\text{-}\Gr$ will denote (as in \cite{article:delRio1991}) the $g$-suspension functor, i.e. $T_g^A(M)=M(g)$ for all $g\in G$. The stabilizer of $M$ in $G$ is, by definition \cite[\S2.2.1]{book:Marcus1999}, the subgroup
\[G_M=\set{g\in G\mid M\simeq M(g)\text{ as }G\text{-graded left }A\text{-modules}}.\]
Let $M,N\in A\text{-}\Gr$. We de\-note by $\Hom_A(M,N),$ the additive group of all $A$-linear homomorphisms from $M$ to $N$. Because $G$ is finite, E. C. Dade showed in \cite[Corollary 3.10]{article:Dade1980} that $\Hom_A(M,N)$ is $G$-graded. More precisely, if $g\in G$, the component of degree $g$ (furthermore called the $g$-component) is defined as in \cite[1.2]{book:Marcus1999}:
\[\Hom_A(M,N)_g:=\set{f\in \Hom_A(M,N)\mid f(M_x)\subseteq N_{xg},\text{ for all }x\in G}.\]
\par We denote by $\text{id}_X$ the identity map defined on a set $X$.

\section{Graded bimodules over a \texorpdfstring{$G$}{G}-graded \texorpdfstring{$G$}{G}-acted algebra}\label{s:graded_bimodules_over_C}

\par We consider the notations given in Section \S\ref{s:Notations}. We recall the definition of a $\G$-graded $\G$-acted algebra and an example of a $\G$-graded $\G$-acted algebra as in \cite{MM2}:
\begin{definition}
 An algebra $\C$ is a $\G$-graded $\G$-acted algebra if
 \begin{enumerate}
	\item $\C$ is $\G$-graded, i.e. $\C=\oplus_{\g\in \G}\C_{\g}$;
	\item $\G$ acts on $\C$ (always on the left in this article);
	\item $\forall \h\in \G,\,\forall c\in \C_{\h}$ we have that $\tensor*[_{}^{\g}]{c}{_{}^{}}\in \C_{\tensor*[_{}^{\g}]{\h}{_{}^{}}}$ for all $\g\in \G$.
 \end{enumerate}
 We denote the identity component (the 1-component) of $\C$ by $\Z:=\C_1$, which is a $\G$-acted algebra.
\end{definition}

\par  Let $A=\bigoplus_{\g\in\G} A_{\g}$ be a strongly $G$-graded $\O$-algebra with identity component $B:=A_1$. For the sake of simplicity, we assume that $A$ is a crossed product (the generalization is not difficult, see for instance \cite[\S 1.4.B.]{book:Marcus1999}). This means that we can choose invertible homogeneous elements $u_{\g}$ in the component $A_{\g}$.

\begin{example} \label{ex:Miyashita}
By Miyashita's theorem \cite[p.22]{book:Marcus1999}, we know that the centralizer $C_A(B)$ is a $\G$-graded $\G$-acted $\O$-algebra: for all $\h\in\G$ we have that $$C_A(B)_{\h}=\set{a\in A_{\h}\mid ab=ba,\,\forall b\in B},$$ and the action is given by $\tensor*[^{\g}]{c}{}=u_{\g}cu^{-1}_{\g},\,\forall \g\in \G,\,\forall c\in C_A(B)$. Note that this definition does not depend on the choice of the elements $u_{\g}$ and that $C_A(B)_1=Z(B)$ (the center of $B$).
\end{example}

\par We recall the definition of a $\G$-graded $\O$-algebra over a $\G$-graded $\G$-acted algebra $\C$ as in \cite{MM2}:

\begin{definition}
		 Let $\C$ be a $\G$-graded $\G$-acted $\O$-algebra. We say that $A$ is a $\G$-graded $\O$-algebra over $\C$ if there is a $\G$-graded $\G$-acted algebra homomorphism
		 $$\zeta:\C\to C_A(B),$$
		 i.e. for any $\h\in \G$ and $c\in \C_h$, we have $\zeta(c)\in C_A(B)_{\h}$, and for every $\g\in\G$, we have $\zeta(\tensor*[^{\g}]{c}{})=\tensor*[^{\g}]{\zeta(c)}{}$.
\end{definition}

\par  An important example of a $\G$-graded $\O$-algebra over a $\G$-graded $\G$-acted algebra, is given by the following lemma:

\begin{lemma} \label{lemma:zetaprimeq}
Let $P$ be a $\G$-graded $A$-module. Assume that $P$ is $\G$-invariant. Let $A'=\mathrm{End}_A(P)^{\text{op}}$ be the set of all $A$-linear endomorphisms of $P$. Then $A'$ is a $\G$-graded $\O$-algebra over $C_A(B)$.
\end{lemma}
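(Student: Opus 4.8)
The plan is to check the two clauses in the definition of a $\G$-graded $\O$-algebra over $\C$ for the pair $(A',C_A(B))$: that $A'$ is $\G$-graded with identity component $B'=A'_1$, and that there is a $\G$-graded $\G$-acted algebra homomorphism $\zeta'\colon C_A(B)\to C_{A'}(B')$. First I would grade $A'$. Because $\G$ is finite, the result of Dade recalled in Section~\ref{s:Notations} makes $\End_A(P)=\Hom_A(P,P)$ a $\G$-graded group, with $\g$-component $\set{f\mid f(P_x)\subseteq P_{x\g}\ \text{for all }x\in\G}$. A one-line check shows that composition sends $\End_A(P)_{\g}\times\End_A(P)_{\g'}$ into $\End_A(P)_{\g'\g}$, so it is exactly the passage to the opposite algebra that converts this into a genuine grading, $A'_{\g}A'_{\g'}\subseteq A'_{\g\g'}$; the identity component $B'=A'_1$ is then the opposite of the algebra of degree-preserving $A$-endomorphisms of $P$. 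This part is routine.

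Next comes the one place where $\G$-invariance is indispensable. For each $\g\in\G$ a graded isomorphism $P\simeq P(\g)$ is, read inside $\End_A(P)$, precisely an invertible homogeneous element $v_{\g}\in A'_{\g}$. Thus $A'$ is itself a crossed product, and Example~\ref{ex:Miyashita} applies verbatim to $A'$: the centralizer $C_{A'}(B')$ is a $\G$-graded $\G$-acted $\O$-algebra, with $\g$-component $C_{A'}(B')_{\g}=\set{\phi\in A'_{\g}\mid \phi\psi=\psi\phi\ \text{for all }\psi\in B'}$ and action ${}^{\g}\phi=v_{\g}\phi v_{\g}^{-1}$, independent of the chosen $v_{\g}$.

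It then remains to produce $\zeta'$. For $c\in C_A(B)$ I would let $\zeta'(c)$ be the endomorphism of $P$ that acts on each homogeneous component $P_x$ as left multiplication by ${}^{x}c=u_x c u_x^{-1}$; equivalently, $\zeta'(c)$ is the unique $A$-linear map extending $q\mapsto cq$ on $P_1$. The twist by the $\G$-action is forced: plain left multiplication by $c$ is only $B$-linear, while twisting component-by-component restores $A$-linearity, as one verifies on elements $u_{\g}p$ using $c\in C_A(B)$ and the crossed-product relations. By construction $\zeta'(c)$ is homogeneous of the same degree as $c$, so $\zeta'$ is graded; it lands in $C_{A'}(B')$ because every degree-preserving $A$-endomorphism, being $A$-linear, commutes with multiplication by ${}^{x}c$. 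Multiplicativity takes the form $\zeta'(c'c)=\zeta'(c')\ast_{A'}\zeta'(c)$ and unwinds to the crossed-product identity $c'c={}^{\h'}c\cdot c'$ for $c'$ homogeneous of degree $\h'$, which is exactly why the opposite algebra is the correct target.

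The hard part will be the $\G$-equivariance $\zeta'({}^{\g}c)={}^{\g}\zeta'(c)$. On the source the action is conjugation by $u_{\g}$ inside $A$, whereas on the target it is conjugation by $v_{\g}$ inside $A'$, so the two crossed-product structures must be reconciled. Since the $v_{\g}$ are pinned down only up to the $\G$-invariance isomorphisms and up to units of $B'$, I would lean on the independence of the Miyashita action from the choice of $v_{\g}$ to reduce the statement to a single compatibility between $v_{\g}$ and $u_{\g}$ on the component $P_1$. Carrying this out while keeping the suspension and $\Hom$-grading conventions of Section~\ref{s:Notations} consistent is, I expect, the main obstacle; everything else is a sequence of direct checks enabled by the crossed-product description of $A'$ secured in the second step.
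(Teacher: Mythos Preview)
Your plan is sound and produces the same structure map as the paper, but the route differs. The paper first invokes Dade to write $P\simeq A\otimes_B U$ and then imports from \cite{MM1} the homomorphism $\theta\colon C_A(B)\to A'$ given by $\theta(c)(a\otimes u)=ac\otimes u$; the verification that $\theta$ is $G$-acted is carried out by explicitly building the invertible $u'_{\g}\in A'_{\g}$ out of auxiliary maps $\varphi_{\g}\colon U\to U$ and computing $u'^{-1}_{\g}\circ\theta(c)\circ u'_{\g}$ in these coordinates. Your $\zeta'(c)$, defined componentwise as left multiplication by ${}^{x}c$ on $P_x$, is the same map (for $a_x\in A_x$ one has $a_xc={}^{x}c\cdot a_x$ since $c\in C_A(B)$), but you avoid the $A\otimes_B U$ model entirely. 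What this buys you is that the step you flag as the ``main obstacle'' becomes a two-line check: any $v_{\g}\in A'_{\g}$ is $A$-linear and sends $P_x$ into $P_{x\g}$, so for $p_x\in P_x$ one has
\[
(v_{\g}^{-1}\circ\zeta'(c)\circ v_{\g})(p_x)=v_{\g}^{-1}\bigl({}^{x\g}c\cdot v_{\g}(p_x)\bigr)={}^{x\g}c\cdot p_x=\zeta'({}^{\g}c)(p_x),
\]
with no need to reconcile $v_{\g}$ and $u_{\g}$ beyond the grading shift. The paper's coordinate computation and your intrinsic one thus reach the same endpoint, with your formulation actually dissolving the difficulty you anticipated.
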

\begin{proof}
By \cite[Theorem 2.8]{article:Dade1980}, we have that there exists some $U\in B-$mod such that $P$ and $A\otimes_B U$ are isomorphic as $G$-graded left $A$-modules, henceforth for simplicity we will identify $P$ as $A\otimes_B U$.
\par Because $G$ is finite, E. C. Dade proved in \cite[Corollary 3.10 and \S4]{article:Dade1980} that $A'=\mathrm{End}_A(P)^{\text{op}}$ is a $\G$-graded $\O$-algebra. Moreover, by \cite[\S 4]{article:Dade1980} we have that $P$ is actually a $\G$-graded $(A,A')$-bimodule.
\par Now, the assumption that $P$ is $\G$-invariant, according to \cite[Corollary 5.14]{article:Dade1980} and \cite[\S 2.2.1]{book:Marcus1999}, implies that $A'=\mathrm{End}_A(P)^{\text{op}}$ is a crossed product and that $P$ is isomorphic to its $g$-suspension, $P(\g)$, for all $\g\in\G$. Henceforth, we can choose invertible homogeneous elements $u'_{\g}$ in the component $A'_{\g}$, for all $\g\in \G$ such that
$$u'_{\g}:P\to P(\g).$$
\par By taking the truncation functor $(-)_1$ (more details are given in  \cite{article:Dade1980}) we obtain the isomorphism:
$$(u'_{\g})_1:P_1\to (P(\g))_1,$$
%where $P_1=(A\otimes_B U)_1=A_1\otimes_B U=B\otimes_B U$ and $(P(\g))_1=P_{\g 1}=P_{\g}=(A\otimes_B U)_{\g}=A_{\g}\otimes_B U=u_{\g}B\otimes_B U$.
where $P_1=B\otimes_B U\simeq U$ and $(P(\g))_1=A_{\g}\otimes_B U=u_{\g}B\otimes_B U$.
We fix arbitrary $a\in A$ and $u\in U$. We have:
$$u'_{\g}(a\otimes_B u)=au'_{\g}(1_A\otimes_B u),$$
but $1_A\otimes_B u\in P_1$, henceforth:
$$u'_{\g}(a\otimes_B u)=a(u'_{\g})_1(1_A\otimes_B u),$$
but there exists an unique $b\in B$ such that $(u'_{\g})_1(1_A\otimes_B u)=u_{\g}b\otimes_B u=u_{\g}\otimes_B bu$. Therefore, by defining $\varphi_{\g}(u):=bu$, we obtain a map $\varphi_{\g}:U\to U$, which is clearly well-defined. Moreover, we have:
$$u'_{\g}(a\otimes_B u)=au_{\g}\otimes_B \varphi_{\g}(u),\text{ for all }a\in A\text{ and }u\in U.$$
\par It is straightforward to prove that $\varphi_{\g}:U\to U$ admits an inverse and that
$$u'^{-1}_{\g}(a\otimes_B u)=au^{-1}_{\g}\otimes_B \varphi^{-1}_{\g}(u),\text{ for all }a\in A\text{ and }u\in U.$$
\par We consider the $\G$-graded algebra homomorphism from \cite[Lemma 3.2.]{MM1}:
\[\theta:C_A(B)\to A'=\mathrm{End}_A(P)^{\text{op}},\qquad 	\theta(c)(a\otimes u)=ac\otimes u,\]
 where $c\in C_A(B)$, $a\in A$, and $u\in U$. First, we will prove that the image of $\theta$ is a subset of $C_{A'}(B')$. Indeed, consider $b'\in B'$ and $c\in C_A(B)$. We want:
$$\theta(c)\circ b'=b'\circ \theta(c).$$
 Consider $a\otimes u\in A\otimes U=P$. We have:
 \[
	(\theta(c)\circ b')(a\otimes u)  =  \theta(c)(b'(a\otimes u)) =  a\theta(c)(b'(1_A\otimes u)),
 \]
 because $b'$ and $\theta(c)$ are $A$-linear. We fix $b'(1_A\otimes u)=a_0\otimes u_0\in A\otimes_B U$, but because $b'\in B'=A'_1=\mathrm{End}_A(P)^{\text{op}}_1$ we know that $b'$ preserves the grading, so $1_A\in A_1$ implies that $a_o\in B$. Hence:
  \[
	(\theta(c)\circ b')(a\otimes u)  =  a\theta(c)(a_0\otimes u_0) =  aa_0c\otimes u_0=  aca_0\otimes u_0.
\]
 Following, we have that:
   $$
\begin{array}{rcl}
	(b'\circ \theta(c))(a\otimes u) & = & (b'( \theta(c)(a\otimes u)) =  b'(ac\otimes u) \\
	& = & acb'(1_A\otimes u) =  aca_0\otimes u_0.
\end{array} $$
 Henceforth, the image of $\theta$ is a subset of $C_{A'}(B')$. Second, we prove that $\theta$ is $\G$-acted, in the sense that:
 $$\theta(\tensor[^{\g}]{c}{})=\tensor[^{\g}]{(\theta(c))}{},\text{ for all }\g\in\G.$$
 Indeed, we fix $\g\in\G$, and $a\otimes u\in A\otimes_B U=P$. We have:
    \[
	\theta(\tensor[^{\g}]{c}{})(a\otimes u)   = \theta(u_{\g}cu^{-1}_{\g})(a\otimes u)=  au_{\g}cu^{-1}_{\g}\otimes u
\]
 and
 $$
\begin{array}{rcl}
	\tensor[^{\g}]{(\theta(c))}{}(a\otimes u) & = & (u'_{\g}\cdot \theta(c)\cdot u'^{-1}_{\g})(a\otimes u)= (u'^{-1}_{\g}\circ \theta(c)\circ u'_{\g})(a\otimes u)\\
	& = & u'^{-1}_{\g}(\theta(c)(u'_{\g}(a\otimes u))) =  u'^{-1}_{\g}(\theta(c)(au_{\g}\otimes \varphi_{\g}(u)))\\
	& = & u'^{-1}_{\g}(au_{\g}c\otimes \varphi_{\g}(u)) =  au_{\g}cu^{-1}_{\g}\otimes \varphi^{-1}_{\g}(\varphi_{\g}(u))\\
	& = & au_{\g}cu^{-1}_{\g}\otimes u.
\end{array}
 $$
\par Finally, by taking $\zeta':C_A(B)\to C_{A'}(B')$ to be the corestriction of $\theta$ to $C_{A'}(B')$, we obtain that $A'$ is a $\G$-graded $\O$-algebra over $C_A(B)$, via the $\G$-graded $\G$-acted homomorphism $\zeta'$.
\end{proof}

\par Let $A'=\bigoplus_{\g\in\G} A'_{\g}$ be another strongly $\G$-graded $\O$-algebra with the identity component $B':=A'_1$. Again, we will consider that also $A'$ is a crossed product, hence we will choose invertible homogeneous elements $u'_{\g}$ in the component $A'_{\g}$, for all $\g\in \G$.

\par  Now, we assume that $A$ and $A'$ are both strongly $\G$-graded $\O$-algebras over a $\G$-graded $\G$-acted algebra $\C$, each endowed with a $\G$-graded $\G$-acted algebra homomorphism $\zeta:\C\to C_A(B)$ and $\zeta':\C\to C_{A'}(B')$ respectively.
\par We recall the definition of a $\G$-graded bimodule over $\C$ as in \cite{MM2}:
\begin{samepage}
\begin{definition} \label{def:graded_bimodules_over_c}
	We say that $\tilde{M}$ is a $\G$-graded $(A,A')$-bimodule over $\C$ if:
	\begin{enumerate}
		\item $\tilde{M}$ is an $(A,A')$-bimodule;
		\item $\tilde{M}$ has a decomposition $\tilde{M}=\bigoplus_{\g\in\G}\tilde{M}_{\g}$ such that $A_{\g}\tilde{M}_{x}A'_{\h}\subseteq \tilde{M}_{\g x\h}$, for all $\g,x,\h\in \G$;
		\item $\tilde{m}_{\g}\cdot c=\tensor*[^{\g}]{c}{}\cdot \tilde{m}_{\g}$, for all $c\in \C,\tilde{m}_{\g}\in\tilde{M}_{\g},\g\in \G$, where $c \cdot \tilde {m} = \zeta(c)\cdot \tilde{m}$ and $\tilde{m}\cdot c=\tilde{m}\cdot \zeta'(c)$, for all $c\in \C,\tilde{m}\in\tilde{M}$.
	\end{enumerate}
\end{definition}
\end{samepage}
\begin{remark}
	Condition (3) of Definition \ref{def:graded_bimodules_over_c} can be rewritten (see \cite{MM2} for the proof) as follows:
	\begin{enumerate}
		\setcounter{enumi}{2}
		\item[(3')] $m\cdot c= c\cdot m$, for all $c\in \C$, $m\in \tilde{M}_1$.
	\end{enumerate}
\end{remark}

\par  An example of a $\G$-graded bimodule over a $\G$-graded $\G$-acted algebra is given by the following proposition:
\begin{proposition} \label{prop:P_bimod_over_C}
Let $\C$ be a $\G$-graded $\G$-acted algebra and $A$ a strongly $\G$-graded $\O$-algebra over $\C$. Let $P$ be a $\G$-invariant $\G$-graded $A$-module. Let $A'=\mathrm{End}_A(P)^{\text{op}}$. Then the following statements hold:
\begin{enumerate}
	\item $A'$ is a $\G$-graded $\O$-algebra over $\C$;
	\item $P$ is a $\G$-graded $(A,A')$-bimodule over $\C$.
\end{enumerate}
\end{proposition}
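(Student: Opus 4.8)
The plan is to derive both claims from Lemma~\ref{lemma:zetaprimeq}, which already exhibits $A'=\End_A(P)^{\op}$ as a $\G$-graded $\O$-algebra over $C_A(B)$ through a $\G$-graded $\G$-acted algebra homomorphism $\zeta':C_A(B)\to C_{A'}(B')$. For statement (1) I would simply compose homomorphisms. Since $A$ is a $\G$-graded $\O$-algebra over $\C$, there is by definition a $\G$-graded $\G$-acted algebra homomorphism $\zeta:\C\to C_A(B)$; setting $\tilde\zeta:=\zeta'\circ\zeta:\C\to C_{A'}(B')$ and observing that a composite of degree-preserving, $\G$-equivariant algebra homomorphisms is again of this type, we obtain that $A'$ is a $\G$-graded $\O$-algebra over $\C$ via $\tilde\zeta$.

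For statement (2) I would check the three conditions of Definition~\ref{def:graded_bimodules_over_c}. Conditions (1) and (2)---that $P$ is an $(A,A')$-bimodule satisfying $A_{\g}P_xA'_{\h}\subseteq P_{\g x\h}$---are precisely the facts about $P$ recalled inside the proof of Lemma~\ref{lemma:zetaprimeq} from Dade's work, so nothing new is required there. For condition (3) I would invoke the Remark and verify instead the equivalent condition (3'), namely $m\cdot c=c\cdot m$ for all $c\in\C$ and $m\in P_1$.

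To establish (3') I would retain the identification $P=A\otimes_B U$ of Lemma~\ref{lemma:zetaprimeq}, under which $P_1=B\otimes_B U\simeq U$, so each $m\in P_1$ has the form $1_A\otimes u$. The left $\C$-action on $P$ is $c\cdot m=\zeta(c)\cdot m$, through the $A$-module structure, while the right $\C$-action is $m\cdot c=m\cdot\tilde\zeta(c)=\theta(\zeta(c))(m)$, using the map $\theta$ of Lemma~\ref{lemma:zetaprimeq} and the convention $m\cdot f=f(m)$ for the right $A'=\End_A(P)^{\op}$-module structure. Evaluating both sides on $m=1_A\otimes u$, the left-hand side equals $\zeta(c)\otimes u$, and, by the defining formula $\theta(c')(a\otimes u)=ac'\otimes u$, the right-hand side equals $1_A\zeta(c)\otimes u=\zeta(c)\otimes u$; the two agree, so (3') holds.

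The only delicate point is the bookkeeping in (3'): the left action of $\C$ enters through $\zeta$ into $A$, whereas the right action enters through the composite $\tilde\zeta=\zeta'\circ\zeta$ into $A'$, and one must recognize that the formula for $\theta$ is exactly what makes the two actions coincide on $P_1$. Once the identification $P\simeq A\otimes_B U$ and the opposite-algebra convention for the right action are in place, the remaining verifications are routine.
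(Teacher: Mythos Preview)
Your argument is correct and, for part~(1), identical to the paper's: both simply compose the structure map $\zeta:\C\to C_A(B)$ with the map $C_A(B)\to C_{A'}(B')$ furnished by Lemma~\ref{lemma:zetaprimeq}. For part~(2) you take a slightly shorter path than the paper. The paper verifies condition~(3) of Definition~\ref{def:graded_bimodules_over_c} directly for an arbitrary homogeneous element $p_g=a_g\otimes u\in P_g$, writing $a_g=u_g b$ via the crossed-product structure and computing $p_g\cdot c=a_g\zeta(c)\otimes u=u_g\zeta(c)b\otimes u={}^g\zeta(c)\,a_g\otimes u={}^gc\cdot p_g$. You instead invoke the Remark to reduce to the equivalent condition~(3') on $P_1$, where the computation collapses to $\theta(\zeta(c))(1_A\otimes u)=\zeta(c)\otimes u=\zeta(c)\cdot(1_A\otimes u)$. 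Your route avoids the conjugation bookkeeping with $u_g$ at the cost of appealing to the equivalence $(3)\Leftrightarrow(3')$ proved in~\cite{MM2}; the paper's route is self-contained but slightly longer. Both are valid and rest on the same identification $P\simeq A\otimes_B U$ and the same formula for~$\theta$.
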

\begin{proof}
\textit{(1)} By Lemma \ref{lemma:zetaprimeq}, we know that $A'$ is $\G$-graded $\O$-algebra over $C_A(B)$ and let $\theta:C_A(B)\to C_{A'}(B')$ be its $G$-graded $G$-acted structure homomorphism. Now, given that $A$ is a strongly $\G$-graded $\O$-algebra over $\C$, we have a $\G$-graded $\G$-acted algebra homomorphism $\zeta:\C\to C_A(B)$ and by taking $\zeta':\C\to C_{A'}(B')$ to be the $\G$-graded $\G$-acted algebra homomorphism obtained by composing $\zeta$ with $\theta$, we obtain that $A'$ is also a $\G$-graded $\O$-algebra over $\C$, with its structure given by $\zeta'$. Hence, the first statement of this proposition was proved.
\par \textit{(2)} Without any loss in generality, we will identify $P$ with $A\otimes_B U$, for some $U\in B-$mod. Following the proof of Lemma \ref{lemma:zetaprimeq} we know that $P$ is a $\G$-graded $(A,A')$-bimodule. We now check that $P$ is $\G$-graded $(A,A')$-bimodule over $\C$.
Indeed, we fix $\g\in\G$, $p_{\g}=a_{\g}\otimes u\in P_{\g}$ and $c\in \C$. We have:
\[
p_{\g}\cdot c    =   (a_{\g}\otimes u) \cdot c  = (a_{\g}\otimes u) \cdot \zeta'(c)   =  a_{\g}\zeta(c)\otimes u,
\]
but $a_{\g}\in A_{\g}$ so there exists a $b\in B$ such that $a_{\g}=u_{\g}b$, therefore:
$$
\begin{array}{rcl}
p_{\g}\cdot c  & = & u_{\g}b\zeta(c)\otimes u =  u_{\g}\zeta(c)b\otimes u\\
& = & u_{\g}\zeta(c)u^{-1}_{\g}u_{\g}b\otimes u =  \tensor[^{\g}]{\zeta(c)}{}a_{\g}\otimes u\\
& = & \zeta(\tensor[^{\g}]{c)}{}a_{\g}\otimes u =  \zeta(\tensor[^{\g}]{c}{})(a_{\g}\otimes u)\\
& = & \zeta(\tensor[^{\g}]{c}{})\cdot p_{\g} =  \tensor[^{\g}]{c}{}\cdot p_{\g}.\\
\end{array}
$$
Henceforth, the last statement of this proposition has been proved.
\end{proof}

\section{Graded Morita contexts over \texorpdfstring{$\C$}{C}} \label{s:graded_equivalence_data_over_C}
We consider the notations given in Section \S\ref{s:Notations}. Let $\C$ be a $G$-graded $G$-acted algebra. We introduce the notion of a $\G$-graded Morita context over $\C$, following the treatment given in \cite[\S12]{book:Faith1973}.
% Note that some authors \cite{book:Faith1973} use the terminology of a set of pre-equivalence data instead of Morita context.
\begin{samepage}
\begin{definition}
	Consider the following Morita context:
	$$(A,A',\tilde{M},\tilde{M}',f,g).$$
	We call it a $G$-graded Morita context over $\C$ if:
	\begin{enumerate}
		\item $A$ and $A'$ are strongly $\G$-graded $\O$-algebras over $\C$;
		\item $\tensor*[_{A}]{\tilde{M}}{_{A'}}$ and $\tensor*[_{A'}]{\tilde{M}}{_{A}^{'}}$ are $\G$-graded bimodules over $\C$;
		\item $f:\tilde{M}\otimes_{A'}\tilde{M}'\to A$ and $g:\tilde{M}'\otimes_{A}\tilde{M}\to A'$ are $\G$-graded bimodule homomorphisms such that by setting $f(\tilde{m}\otimes\tilde{m}')=\tilde{m}\tilde{m}'$ and $g(\tilde{m}'\otimes\tilde{m})=\tilde{m}'\tilde{m}$, we have the associative laws:
		\[
	(\tilde{m}\tilde{m}')\tilde{n}=\tilde{m}(\tilde{m}'\tilde{n})\quad\text{and}\quad (\tilde{m}'\tilde{m})\tilde{n}'=\tilde{m}'(\tilde{m}\tilde{n}'),
	\]
	for all $\tilde{m},\tilde{n}\in\tilde{M}$, $\tilde{m}',\tilde{n}'\in\tilde{M}'$.
	\end{enumerate}
	If $f$ and $g$ are isomorphisms, then $(A,A',\tilde{M},\tilde{M}',f,g)$ is called a surjective $G$-graded Morita context over $\C$.
\end{definition}
\end{samepage}
	%\par Note that, if $f$ and $g$ are isomorphisms, then $(A,A',\tilde{M},\tilde{M}',f,g)$ is called in \cite{book:Faith1973} a set of $\G$-graded equivalence data over $\C$.

%\medskip
%	\par Remark that the associative law required by this definition is equivalent to the commutativity of the following diagrams:
%
%$$
%\xymatrix@C+=3cm{
%\tilde{M}\otimes_{A'}\tilde{M}'\otimes_{A}\tilde{M} \ar[r]^{\text{id}_{\tilde{M}}\otimes g} \ar[d]_{f\otimes \text{id}_{\tilde{M}'}} & \tilde{M}\otimes_{A'}A' \ar[d]\\
%A\otimes_{A}\tilde{M} \ar[r] & \tilde{M}
%}
%$$
%and
%$$
%\xymatrix@C+=3cm{
%\tilde{M}'\otimes_{A}\tilde{M}\otimes_{A'}\tilde{M}' \ar[r]^{\text{id}_{\tilde{M}}\otimes f} \ar[d]_{g\otimes \text{id}_{\tilde{M}'}} & \tilde{M}'\otimes_{A}A \ar[d]\\
%A'\otimes_{A'}\tilde{M}' \ar[r] & \tilde{M}'
%}
%$$
%\noindent where the unmarked arrows are the canonical isomorphisms of $ \tilde{M}\otimes_{A'}A'$ with $\tilde{M}$, of $A\otimes_{A}\tilde{M}$ with $\tilde{M}$, of $\tilde{M}'\otimes_{A}A$ with $\tilde{M}'$ and of $A'\otimes_{A'}\tilde{M}'$ with  $\tilde{M}'$, respectively.
%\medskip
\par  As an example of a $\G$-graded Morita context over $\C$, we have the following proposition which arises from \cite[Proposition 12.6]{book:Faith1973}.

\begin{proposition} \label{prop:preequiv_example}
Let $A$ be a strongly $\G$-graded $\O$-algebra over $\C$, let $P$ be a $\G$-invariant $\G$-graded $A$-module, let $A'=\mathrm{End}_A(P)^{\text{op}}$ and let $P^{\ast}:=\Hom_A(P,A)$ be the $A$-dual of $P$. Then
$$(A,A',P,P^{\ast},(\cdot,\cdot),[\cdot,\cdot])$$
is a $\G$-graded Morita context over $\C$, where $(\cdot,\cdot)$ is a $\G$-graded $(A,A)$-homo\-morphism, called the evaluation map, defined by:
\[
\begin{array}{l}
(\cdot,\cdot):P\otimes_{A'}P^{\ast}\to A,\\
x\otimes \varphi\mapsto \varphi(x),\text{ for all }\varphi\in P^{\ast},\,x\in P,
\end{array}
\]
and where $[\cdot,\cdot]$ is a $\G$-graded $(A',A')$-homomorphism defined by:
\[
\begin{array}{l}
[\cdot,\cdot]:P^{\ast}\otimes_{A}P\to A',\\
\varphi\otimes x\mapsto  [\varphi,x],\text{ for all }\varphi\in P^{\ast},\,x\in P,
\end{array}
\]
where for every $\varphi\in P^{\ast}$ and $x\in P$, $[\varphi,x]$ is an element of $A'$ such that
$$
y[\varphi,x]=\varphi(y)\cdot x,\text{ for all }y\in P.
$$
\end{proposition}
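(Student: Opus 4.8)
The plan is to verify the three defining conditions of a $\G$-graded Morita context over $\C$, importing from Faith's ungraded \cite[Proposition 12.6]{book:Faith1973} the purely ring-theoretic content (that the two pairings are bimodule homomorphisms and that the two associativity laws hold) and supplying only the graded and $\C$-equivariant part. Condition (1) is immediate: $A$ is a strongly $\G$-graded $\O$-algebra over $\C$ by hypothesis, and $A'=\End_A(P)^{\op}$ is one by Proposition~\ref{prop:P_bimod_over_C}(1).

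The substance of the argument is condition (2). The bimodule $P$ is already a $\G$-graded $(A,A')$-bimodule over $\C$ by Proposition~\ref{prop:P_bimod_over_C}(2), so it remains to treat $P^{\ast}=\Hom_A(P,A)$ as an $(A',A)$-bimodule. I would first record the two actions: $A$ acts on the right of $P^{\ast}$ by $(\varphi\cdot a)(x)=\varphi(x)a$, and $A'=\End_A(P)^{\op}$ acts on the left by $(\alpha\cdot\varphi)(x)=\varphi(x\cdot\alpha)$, where $x\cdot\alpha$ denotes the right $A'$-action on $P$. Since $\G$ is finite, Dade's theorem makes $P^{\ast}$ $\G$-graded with $\g$-component $\set{\varphi\mid\varphi(P_x)\subseteq A_{x\g}\text{ for all }x\in\G}$, and from this description the compatibility $A'_{\g}(P^{\ast})_{x}A_{\h}\subseteq(P^{\ast})_{\g x\h}$ is a routine degree count. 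The decisive point is the $\C$-compatibility, which by the Remark following Definition~\ref{def:graded_bimodules_over_c} may be checked in the form (3'): for $\varphi\in(P^{\ast})_1$ and $c\in\C$ one needs $\varphi\cdot\zeta(c)=\zeta'(c)\cdot\varphi$. Identifying $P$ with $A\otimes_B U$ and evaluating both sides (as elements of $P^{\ast}$) on a generator $a\otimes u$, the left $A$-linearity of $\varphi$ collapses each side to a multiple of $\varphi(1_A\otimes u)$, and equality comes down to $\zeta(c)\,\varphi(1_A\otimes u)=\varphi(1_A\otimes u)\,\zeta(c)$. This holds because $\varphi\in(P^{\ast})_1$ forces $\varphi(1_A\otimes u)\in A_1=B$, while $\zeta(c)\in C_A(B)$ centralizes $B$; this is precisely where the two hypotheses on $\varphi$ and on $\zeta$ meet.

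For condition (3), \cite[Proposition 12.6]{book:Faith1973} already gives that $(\cdot,\cdot)$ is an $(A,A)$-bimodule homomorphism, that $[\cdot,\cdot]$ is an $(A',A')$-bimodule homomorphism, and that the two associativity laws hold, so only the grading remains. For the evaluation, $x\in P_{\g}$ and $\varphi\in(P^{\ast})_{\h}$ give $\varphi(x)\in A_{\g\h}$, so $(\cdot,\cdot)$ maps the degree-$\g\h$ part of $P\otimes_{A'}P^{\ast}$ into $A_{\g\h}$. For the bracket, the defining relation $y[\varphi,x]=\varphi(y)\,x$ with $\varphi\in(P^{\ast})_{\h}$, $x\in P_{\g}$ and $y\in P_{s}$ yields $\varphi(y)x\in A_{s\h}P_{\g}\subseteq P_{s\h\g}$, so $[\varphi,x]\in A'_{\h\g}$, matching the degree of $\varphi\otimes x$ in $P^{\ast}\otimes_{A}P$; hence both pairings are degree-preserving.

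I expect the main obstacle to be condition (2) for $P^{\ast}$: pinning down the left $A'$-action correctly through the identification $A'=\End_A(P)^{\op}$ and then executing the centralizer computation for (3'). Once the actions are written out explicitly the argument is short, resting entirely on the two facts $\zeta(c)\in C_A(B)$ and $\varphi(P_1)\subseteq B$.
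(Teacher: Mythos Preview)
Your proposal is correct and follows essentially the same route as the paper: invoke Proposition~\ref{prop:P_bimod_over_C} for $A'$ and $P$, establish that $P^{\ast}$ is a $\G$-graded $(A',A)$-bimodule over $\C$, and then check that the two pairings are degree-preserving with the associativity laws coming from Faith. The one noteworthy difference is that you verify the $\C$-compatibility of $P^{\ast}$ via the equivalent degree-$1$ condition~(3'), reducing to the single commutation $\zeta(c)\,\varphi(1_A\otimes u)=\varphi(1_A\otimes u)\,\zeta(c)$, whereas the paper checks condition~(3) directly for an arbitrary $\varphi_{\g}\in P^{\ast}_{\g}$ by writing $\varphi_{\g}(p_{\h})=u_{\h\g}b$ in the crossed product and conjugating; your shortcut is cleaner and rests on exactly the same two facts ($\varphi(P_1)\subseteq B$ and $\zeta(c)\in C_A(B)$).
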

\begin{proof}
For the sake of simplicity, we will assume that $A$ is a crossed product as in Section \S\ref{s:graded_bimodules_over_C}.
By Proposition \ref{prop:P_bimod_over_C}, we have that $A'$ is also a $\G$-graded $\O$-algebra over $\C$ and that $P$ is a $\G$-graded $(A,A')$-bimodule over $\C$.
Now, it is known that the $A$-dual of $P$, $P^{\ast}:=\Hom_A(P,A)$ is an $(A',A)$-bimodule, where for each $\varphi\in P^{\ast}$ and for each $p\in P$, we have:
$$(a'\varphi a)(p)=(\varphi(pa'))a,$$
for all $a'\in A'$ and $a\in A$. By \cite[\S 1.6.4.]{book:Marcus1999}, we know that $P^{\ast}$ is actually a $\G$-graded $(A',A)$-bimodule, where for all $\g\in\G$, the $\g$-component is defined as follows:
$$P^{\ast}_{\g}=\set{\varphi\in P^{\ast}\mid \varphi(P_{x})\subseteq A_{x\g},\,\text{for all }x\in\G}.$$
We prove that $P^{\ast}$ is a $\G$-graded $(A',A)$-bimodule over $\C$. Consider $\g,\h\in \G$, $\varphi_{\g}\in P^{\ast}_{\g}$, $c\in C$ and $p_{\h}\in P_h$. We have:
\[
	(\varphi_{\g}c)(p_{\h}) = (\varphi_{\g})(p_{\h})c.
\]
Because $(\varphi_{\g})(p_{\h})\in A_{\h\g}$ we can choose a homogeneous element $u_{\h\g}\in A_{\h\g}$ and $b\in B$ such that $(\varphi_{\g})(p_{\h})=u_{\h\g}b$. Henceforth,
%$$ //Versiunea pe lung
%\begin{array}{rcl}
%	(\varphi_{\g}c)(p_{\h}) & = & u_{\h\g}bc\text{, but }c\in C_A(B)\\
%	& = & u_{\h\g}cb\\
%	& = & u_{\h\g}cu^{-1}_{\h\g}u_{\h\g}b\\
%	& = & \tensor*[^{\h\g}]{c}{}u_{\h\g}b\\
%	& = & \tensor*[^{\h\g}]{c}{}(\varphi_{\g})(p_{\h})\text{, but }\varphi_{\g}\text{ is }A\text{-linear}\\
%	& = & (\varphi_{\g})(\tensor*[^{\h\g}]{c}{}p_{\h})\text{, but }p_{\h}\in P,\text{ which is over }\C\\
%	& = & (\varphi_{\g})(p_{\h}\tensor*[^{\g}]{c}{})\\
%	& = & (\tensor*[^{\g}]{c}{}\varphi_{\g})(p_{\h}),
%\end{array}
%$$
$$
\begin{array}{rcl}
	(\varphi_{\g}c)(p_{\h}) & = & u_{\h\g}bc =  u_{\h\g}cb\, = \, u_{\h\g}cu^{-1}_{\h\g}u_{\h\g}b
	\, = \, \tensor*[^{\h\g}]{c}{}u_{\h\g}b\\
	& = & \tensor*[^{\h\g}]{c}{}(\varphi_{\g})(p_{\h}) =  (\varphi_{\g})(\tensor*[^{\h\g}]{c}{}p_{\h}) \\
	& = & (\varphi_{\g})(p_{\h}\tensor*[^{\g}]{c}{})
	\, = \, (\tensor*[^{\g}]{c}{}\varphi_{\g})(p_{\h}),
\end{array}
$$
thus $\varphi_{\g}c = \tensor*[^{\g}]{c}{}\varphi_{\g}$, therefore $P^{\ast}$ is a $\G$-graded $(A',A)$-bimodule over $\C$. Next, following \cite[\S 12]{book:Faith1973}, it is clear that $(\cdot,\cdot)$ and $[\cdot,\cdot]$ are an $(A,A)$-homomorphism and an $(A',A')$-homomorphism, respectively. We now check if they are graded, as in the sense of \cite[\S 3]{article:Boisen1994}: Indeed, consider $p_{\g}\in P_{\g}$ and $\varphi_{\h}\in  P^{\ast}_{\h}$. We have:
\[(p_{\g},\varphi_{\h})=\varphi_{\h}(p_{\g}),\]
which is an element of $A_{\g\h}$, given the gradation of $P^{\ast}:=\Hom_A(P,A)$. Also, for every $y_{k}\in P_{k}$, we have:
\[[\varphi_{\h},p_{\g}](y_{k})=\varphi_h(y_{k})p_{\g},\]
which is an element of $A_{k\h\g}$, given the gradation of $P^{\ast}$ and of $P$, therefore $[\varphi_{\h},p_{\g}]$ is an element of $A_{\h\g}$.
Finally, we verify the associative law of the two homomorphisms: Let $p,q\in P$ and $\varphi,\psi\in P^{\ast}$. We have:
\[
(p,\varphi)q=\varphi(p)q
\quad\text{and}\quad
p[\varphi,q]=\varphi(p)q,
\]
hence
\[
(p,\varphi)q=p[\varphi,q];
\]
and for all $y\in P$, we have:
\[
([\varphi,p]\psi)(y)=\psi(y[\varphi,p])=\psi(\varphi(y)p)=\varphi(y)\psi(p),
\]
because $\varphi(y)\in A$ and $\psi$ is $A$-linear, and also we have
\[
(\varphi(p,\psi))(y)=(\varphi\psi(p))(y)=\varphi(y)\psi(p),
\]
because $\psi(p)\in A$, hence $$[\varphi,p]\psi=\varphi(p,\psi).$$
Therefore $(A,A',P,P^{\ast},(\cdot,\cdot),[\cdot,\cdot])$ is a $\G$-graded Morita context over $\C$.
\end{proof}

\par  If $(A,A',\tilde{M},\tilde{M}',f,g)$ is a surjective $\G$-graded Morita context over $\C$, then by Proposition 12.7 of \cite{book:Faith1973}, we have that $A'$ is isomorphic to $\mathrm{End}_A(\tilde{M})^{\text{op}}$ and we have a bimodule isomorphism between $\tilde{M}'$ and $\tilde{M}^{\ast}=\Hom_A(\tilde{M},A)$. Henceforth, in this situation, the example given by Proposition \ref{prop:preequiv_example} is essentially unique up to an isomorphism.

\par Given Corollary 12.8 of \cite{book:Faith1973}, the example given by Proposition \ref{prop:preequiv_example} is a surjective $\G$-graded Morita context over $\C$ if and only if $\tensor[_{A}]{P}{}$ is a progenerator.

\section{Graded Morita theorems over \texorpdfstring{$\C$}{C}} \label{s:Morita_theorems}

\par  Consider the notations given in \S\ref{s:Notations}. Let $\C$ be a $G$-graded $G$-acted algebra. We denote by $A$ and $A'$ two strongly $G$-graded $\O$-algebras over $\C$ (with identity components $B:=A_1$ and $B':=A'_1$), each endowed with a $\G$-graded $\G$-acted algebra homomorphism $\zeta:\C\to C_A(B)$ and $\zeta':\C\to C_{A'}(B')$ respectively.  According to \cite{article:delRio1991} we have the following definitions:

\newcommand{\F}{\tilde{\mathcal{F}}}
\begin{definition}
\begin{enumerate}
	\item We say that the functor $\F:A\text{-}\Gr\to A'\text{-}\Gr$ is $G$-graded if for every $g\in G$, $\F$ commutes with the $g$-suspension functor, i.e. $\F\circ T^A_g$ is naturally isomorphic to $T^{A'}_g\circ \F$;
	\item We say that $A$ and $A'$ are $G$-graded Morita equivalent if there is a $\G$-graded equivalence: $\F:A\text{-}\Gr\to A'\text{-}\Gr$.
\end{enumerate}
\end{definition}

Assume that $A$ and $A'$ are $\G$-graded Morita equivalent. Therefore, we can consider the $G$-graded functors:
$$
\xymatrix@C+=3cm{
A\text{-}\Gr \ar@<+.5ex>[r]^{\tilde{\mathcal{F}}} & A'\text{-}\Gr \ar@<+.5ex>[l]^{\tilde{\mathcal{G}}}
}
$$
which give a $\G$-graded Morita equivalence between $A$ and $A'$. By Gordon and Green's result \cite[Corollary 10]{article:delRio1991}, this is equivalent to the existence of a Morita equivalence between $A$ and $A'$ given by the following functors:
$$
\xymatrix@C+=3cm{
A\text{-Mod} \ar@<+.5ex>[r]^{\mathcal{F}} & \ar@<+.5ex>[l]^{\mathcal{G}} A'\text{-Mod};
}
$$
such that the following diagram is commutative:
$$
\xymatrix@C+=3cm{
A\text{-}\Gr \ar[d]_{U} \ar@<+.5ex>[r]^{\tilde{\mathcal{F}}} & A'\text{-}\Gr \ar[d]^{U'} \ar@<+.5ex>[l]^{\tilde{\mathcal{G}}}\\
A\text{-Mod} \ar@<+.5ex>[r]^{\mathcal{F}} & \ar@<+.5ex>[l]^{\mathcal{G}} A'\text{-Mod}
}
$$
in the sense that:
\[
U'\circ \tilde{F} =  F\circ U, \ \qquad U\circ \tilde{G}  =  G\circ U',
\]
where $U'$ is the forgetful functor from $A'\text{-}\Gr$ to $A'\text{-Mod}$.
\renewcommand{\P}{\tilde{P}}
\begin{lemma}\label{remark:same_stabilizer}
If $\tilde{P}$ is a $G$-graded $A$-module, then $\tilde{P}$ and $\F(\tilde{P})$ have the same stabilizer in $G$.
\end{lemma}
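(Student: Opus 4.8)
The plan is to establish the two inclusions $G_{\P}\subseteq G_{\F(\P)}$ and $G_{\F(\P)}\subseteq G_{\P}$ separately, exploiting only two properties of $\F$: that, being one half of an equivalence, it sends isomorphic $G$-graded modules to isomorphic $G$-graded modules, and that, being $G$-graded, it commutes with the suspension functors up to natural isomorphism, i.e. $\F\circ T_g^A\cong T_g^{A'}\circ\F$ for every $g\in G$.

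First I would record the elementary observation that the stabilizer is an invariant of the $G$-graded isomorphism class: if $M\simeq N$ in $A\text{-}\Gr$ then, since $T_g^A$ is a functor, $M(g)\simeq N(g)$, and hence $M\simeq M(g)$ holds if and only if $N\simeq N(g)$; thus $G_M=G_N$. This lets me replace any module by an isomorphic one at will in what follows.

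For the forward inclusion, fix $g\in G_{\P}$, so that $\P\simeq T_g^A(\P)$ in $A\text{-}\Gr$. Applying $\F$ and using that it preserves isomorphisms gives $\F(\P)\simeq \F(T_g^A(\P))$; the $G$-gradedness of $\F$ then yields $\F(T_g^A(\P))\simeq T_g^{A'}(\F(\P))$. Chaining these two isomorphisms shows $\F(\P)\simeq T_g^{A'}(\F(\P))$, i.e. $g\in G_{\F(\P)}$, so $G_{\P}\subseteq G_{\F(\P)}$.

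For the reverse inclusion I would run the identical argument with the quasi-inverse $\tilde{\mathcal{G}}$, which by hypothesis is also a $G$-graded functor: applied to the $A'$-module $\F(\P)$ it gives $G_{\F(\P)}\subseteq G_{\tilde{\mathcal{G}}(\F(\P))}$. Finally, since $\tilde{\mathcal{G}}\circ\F$ is naturally isomorphic to the identity on $A\text{-}\Gr$, we have $\tilde{\mathcal{G}}(\F(\P))\simeq\P$, whence $G_{\tilde{\mathcal{G}}(\F(\P))}=G_{\P}$ by the isomorphism-invariance observed above; this gives $G_{\F(\P)}\subseteq G_{\P}$ and completes the equality. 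The only point requiring care --- rather than a genuine obstacle --- is the bookkeeping of the natural isomorphisms, so that every ``$\simeq$'' is genuinely an isomorphism in the category $A'\text{-}\Gr$ (respectively $A\text{-}\Gr$) of $G$-graded modules and not merely in the underlying module category; once the two structural properties of $\F$ and $\tilde{\mathcal{G}}$ are in hand, the argument is purely formal.
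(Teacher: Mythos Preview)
Your proof is correct and follows essentially the same route as the paper: both establish $G_{\P}\subseteq G_{\F(\P)}$ by applying $\F$ to an isomorphism $\P\simeq\P(g)$ and invoking the natural isomorphism $\F\circ T_g^A\cong T_g^{A'}\circ\F$. The paper dismisses the reverse inclusion as ``straightforward''; your argument via the quasi-inverse $\tilde{\mathcal{G}}$ and the isomorphism $\tilde{\mathcal{G}}(\F(\P))\simeq\P$ is exactly what is intended there, so you have simply spelled out what the paper leaves implicit.
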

\begin{proof}
Let $g\in G_{\P}$. We have $\P\simeq \P(g)$ as $G$-graded $A$-modules. Because $\F$ is a graded functor, we have that it commutes with the $g$-suspension functor. Thus $\F(\tilde{P}(g))\simeq \F(\tilde{P})(g)$ in $A'\text{-}\Gr$. Henceforth, $\F(\tilde{P})\simeq \F(\tilde{P})(g)$ in $A'\text{-}\Gr$, thus $g\in G_{\F(\P)}$. Hence $G_{\P}\subseteq G_{\F(\P)}$. The converse, $G_{\F(\P)}\subseteq G_{\P}$, is straightforward, thus $G_{\P}= G_{\F(\P)}$.
\end{proof}
\par  Consider $\tilde{P}$ and $\tilde{Q}$ two $\G$-graded $A$-modules. %We denote $P:=U(\tilde{P})$ and $Q:=U(\tilde{Q})$, which are clearly left $A$-modules.
We have the following morphism:
\begin{equation}\label{eqn:morphism}
\xymatrix@C+=3cm{
\Hom_{A}(\tilde{P},\tilde{Q}) \ar[r]^{\mathcal{\tilde{F}}}& \Hom_{A'}(\mathcal{\tilde{F}}(\tilde{P}),\mathcal{\tilde{F}}(\tilde{Q})).
}
\tag{$\ast$}
\end{equation}

\par By following the proofs of Lemma \ref{lemma:zetaprimeq} and Proposition \ref{prop:P_bimod_over_C}, we have a $G$-graded homomorphism from $\C$ to $\mathrm{End}_A(\tilde{P})^{\text{op}}$ (the composition between the structure homomorphism $\zeta:\C\to C_A(B)$ and the morphism $\theta:C_A(B)\to \mathrm{End}_A(\tilde{P})^{\text{op}}$ from \cite[Lemma 3.2.]{MM1}) and that $\tilde{P}$ is a $\G$-graded $(A,\mathrm{End}_A(\tilde{P})^{\text{op}})$-bimodule. Then, by the restriction of scalars we obtain that $\tilde{P}$ is a right $\C$-module. Analogously $\tilde{Q}$, $\mathcal{\tilde{F}}(\tilde{P})$ and $\mathcal{\tilde{F}}(\tilde{Q})$ are also right $\C$-modules, thus $\Hom_{A}(\tilde{P},\tilde{Q})$ and $ \Hom_{A}(\mathcal{\tilde{F}}(\tilde{P}),\mathcal{\tilde{F}}(\tilde{Q}))$ are $\G$-graded $(\C,\C)$-bimodules. This allows us to state the following definition:
%%%%%%%%%%%%%%%%%%%%%%%%%%%%%%%%%%%%%%%%%%%%%%%%%%%%%%%%%%%%%%%%%%%%%%%%%%%%%%%%
%%%%%%%%%%%%%%%%%%%%The following def is in normal font%%%%%%%%%%%%%%%%%%%%%%%%%
%%%%%%%%%%%%%%%%%%%%%%%%%%%%%%%%%%%%%%%%%%%%%%%%%%%%%%%%%%%%%%%%%%%%%%%%%%%%%%%%
\begin{definition}\normalfont
\par
\begin{enumerate}
	\item We say that the functor $\F$ is over $\C$ if the morphism $\F$ (see (\ref{eqn:morphism})) is a morphism of $\G$-graded $(\C,\C)$-bimodules;
	\item We say that $A$ and $A'$ are $G$-graded Morita equivalent over $\C$ if there is a $\G$-graded equivalence over $\C$: $\F:A\text{-}\Gr\to A'\text{-}\Gr$.
\end{enumerate}
\end{definition}

\begin{theorem}[Graded Morita I over $\C$]
	Let $(A,A',\tilde{M},\tilde{M}',f,g)$ be a surjective $\G$-graded Morita context over $\C$. Then the functors
	$$
\begin{array}{c}
	\tilde{M}'\otimes_{A}-:A\text{-}\Gr\to A'\text{-}\Gr\\
	\tilde{M}\otimes_{A'}-:A'\text{-}\Gr\to A\text{-}\Gr
\end{array}	
	$$
	are inverse $G$-graded equivalences over $\C$.
\end{theorem}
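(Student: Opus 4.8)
The plan is to separate the statement into its ``graded Morita'' content, which I would borrow from the theory recalled in the introduction, and its genuinely new ``over $\C$'' content, which is where I would do the work. First I would unwind the surjectivity hypothesis: by Proposition 12.7 of \cite{book:Faith1973} the isomorphisms $f,g$ yield graded bimodule isomorphisms $A'\cong\End_A(\tilde M)^{\op}$ and $\tilde M'\cong\tilde M^{\ast}$, and by Corollary 12.8 of \cite{book:Faith1973} the module ${}_A\tilde M$ is a progenerator; since $A'=\End_A(\tilde M)^{\op}$ is strongly graded over $\C$, \cite[Corollary 5.14]{article:Dade1980} forces $\tilde M$ to be $G$-invariant. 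Hence the data is exactly that of Proposition \ref{prop:preequiv_example}, and in particular $\tilde M,\tilde M'$ are $G$-graded bimodules over $\C$.

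Next I would check that $\tilde M'\otimes_A-$ and $\tilde M\otimes_{A'}-$ restrict to functors between the graded categories: since $\tilde M'$ is a $G$-graded $(A',A)$-bimodule, $\tilde M'\otimes_A X$ inherits the grading $(\tilde M'\otimes_A X)_s=\sum_{pq=s}\tilde M'_p\otimes X_q$, so the functor maps $A\text{-}\Gr$ to $A'\text{-}\Gr$ and commutes with the forgetful functors $U,U'$; symmetrically for $\tilde M\otimes_{A'}-$. Because $f$ and $g$ are $G$-graded bimodule homomorphisms, the canonical isomorphisms $\tilde M\otimes_{A'}(\tilde M'\otimes_A X)\cong(\tilde M\otimes_{A'}\tilde M')\otimes_A X\cong A\otimes_A X\cong X$ and its mirror image are graded, so the two functors are inverse equivalences at the graded level. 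The one point I would not try to prove by hand is that they are $G$-graded functors, i.e. that they commute with every suspension $T_g$: for nonabelian $G$ the naive identity $\tilde M'\otimes_A X(g)=(\tilde M'\otimes_A X)(g)$ is not degree-preserving, and one really needs a natural isomorphism $\tilde M'\otimes_A A(g)\cong A'(g)\otimes_{A'}\tilde M'$. I would deduce this from the graded Morita theory of \cite{article:delRio1991} and \cite{article:Boisen1994}, using that $\tilde M$ is a $G$-invariant graded progenerator and that the square with $U,U'$ commutes, so that del R\'io's Corollary 10 applies.

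The core of the proof, and where I would spend most effort, is showing that both functors are over $\C$, i.e. that the morphism (\ref{eqn:morphism}) is a map of $G$-graded $(\C,\C)$-bimodules. Writing $\tilde{\mathcal F}=\tilde M'\otimes_A-$, this morphism sends $\varphi$ to $\mathrm{id}_{\tilde M'}\otimes\varphi$. Degree-preservation is immediate: if $\varphi\in\Hom_A(\tilde P,\tilde Q)_h$ then $\varphi(\tilde P_x)\subseteq\tilde Q_{xh}$, so $(\mathrm{id}\otimes\varphi)\big((\tilde M'\otimes\tilde P)_s\big)\subseteq(\tilde M'\otimes\tilde Q)_{sh}$. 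For the bimodule part I would first identify the right $\C$-action on $\tilde{\mathcal F}(\tilde P)$ given intrinsically by restriction of scalars along $\C\to C_{A'}(B')\to\End_{A'}(\tilde{\mathcal F}(\tilde P))^{\op}$ with the action transported through the tensor product, $(m'\otimes p)\cdot c=m'\otimes(p\cdot c)$; this identification is exactly where the hypothesis that $\tilde M'$ is a bimodule over $\C$ (condition (3) of Definition \ref{def:graded_bimodules_over_c}) is used. Granting it, the $(\C,\C)$-bilinearity of (\ref{eqn:morphism}) reduces to the computation $\mathrm{id}\otimes(c\cdot\varphi\cdot c')=c\cdot(\mathrm{id}\otimes\varphi)\cdot c'$, which follows directly from the formulas $(c\cdot\varphi)(p)=\varphi(p\cdot c)$ and $(\varphi\cdot c)(p)=\varphi(p)\cdot c$. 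The same argument applies verbatim to $\tilde M\otimes_{A'}-$.

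The main obstacle, I expect, is precisely this compatibility of the two a priori different $\C$-structures on $\tilde{\mathcal F}(\tilde P)$ (equivalently, the $\C$-bilinearity of (\ref{eqn:morphism})): the inverse-equivalence and degree-preservation statements are essentially formal, and commuting with suspension can be imported from the existing graded Morita theory, but reconciling the intrinsic $\C$-action coming from $\zeta'$ and $\End_{A'}(\tilde{\mathcal F}(\tilde P))^{\op}$ with the one inherited from $\tilde P$ through $\tilde M'$ is what the whole ``over $\C$'' formalism is designed to handle, and it is the step I would verify most carefully.
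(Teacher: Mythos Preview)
Your proposal is correct and lands on the same core computation as the paper, but the route there differs in two respects.

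First, your opening paragraph is doing work the hypotheses already hand you: by definition of a $G$-graded Morita context over $\C$, the bimodules $\tilde M$ and $\tilde M'$ are \emph{already} $G$-graded bimodules over $\C$, so the detour through Faith's Proposition~12.7, Dade's Corollary~5.14, and Proposition~\ref{prop:preequiv_example} to recover that fact is unnecessary. The paper simply invokes \cite[Theorem~3.2(6)]{article:Boisen1994} in one line to get that $\tilde M'\otimes_A-$ and $\tilde M\otimes_{A'}-$ are inverse $G$-graded equivalences, absorbing at once the well-definedness on graded categories, the inverse-equivalence statement, and the commutation with suspensions that you treat separately in your second paragraph.

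Second, the ``over $\C$'' verification in the paper is exactly your third paragraph: it checks degree-preservation by the same one-line inclusion argument, and checks $(\C,\C)$-bilinearity by the identical computation $\tilde M'\otimes_A(c\varphi c')=c(\tilde M'\otimes_A\varphi)c'$ evaluated on a simple tensor. Where you are more scrupulous than the paper is in your final paragraph: the paper silently uses $(\tilde m'\otimes\tilde p)\cdot c=\tilde m'\otimes(\tilde p\cdot c)$ as the right $\C$-action on $\tilde M'\otimes_A\tilde P$ without pausing to reconcile this transported action with the intrinsic one coming from $\zeta'$ and the map $C_{A'}(B')\to\End_{A'}(\tilde M'\otimes_A\tilde P)^{\op}$ described just before Definition~6. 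Your instinct that this compatibility is the real content of ``over $\C$'' and deserves an explicit line (using condition~(3) of Definition~\ref{def:graded_bimodules_over_c} for $\tilde M'$) is sound; the paper's computation is correct once that identification is granted, but it does not spell it out.
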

\begin{proof}
	Given \cite[Theorem 3.2 (Graded Morita I) 6.]{article:Boisen1994} we already know that the pair of functors $\tilde{M}'\otimes_{A}-$ and $\tilde{M}\otimes_{A'}-$ are inverse $G$-graded equivalences. It remains to prove that they are also over $\C$. We will only prove that the functor $\tilde{M}'\otimes_{A}-$ is over $\C$ because the proof for the latter functor is similar.
	 Consider $\tilde{P}$ and $\tilde{Q}$ two $\G$-graded $A$-modules. %defined as above.
\par First, we will prove that the morphism
\begin{equation}\label{eqn:morphism_2}
\tilde{M}'\otimes_{A}-: \Hom_A(\tilde{P},\tilde{Q})\to \Hom_{A'}(\tilde{M}'\otimes_{A}\tilde{P},\tilde{M}'\otimes_{A}\tilde{Q})
\tag{$\ast\ast$}
\end{equation}
is a $(\C,\C)$-bimodule homomorphism. Indeed, consider $\varphi\in \Hom_A(\tilde{P},\tilde{Q})$, then $\tilde{M}'\otimes_{A}\varphi\in \Hom_{A'}(\tilde{M}'\otimes_{A}\tilde{P},\tilde{M}'\otimes_{A}\tilde{Q})$. Consider $c,c'\in\C$. We only need to prove that
	 $$\tilde{M}'\otimes_{A}(c\varphi c')=c(\tilde{M}'\otimes_{A}\varphi)c'.$$
	Let $\tilde{m}'\in\tilde{M}'$ and $\tilde{p}\in\tilde{P}$. We have:
	\[
	(\tilde{M}'\otimes_{A}(c\varphi c'))(\tilde{m}'\otimes \tilde{p})   =   \tilde{m}'\otimes (c\varphi c')( \tilde{p}) = \tilde{m}'\otimes \varphi(\tilde{p}c)c'
	\]
	and
	$$
	\begin{array}{rcl}
	(c(\tilde{M}'\otimes_{A}\varphi)c')(\tilde{m}'\otimes \tilde{p}) & = & ((\tilde{M}'\otimes_{A}\varphi)((\tilde{m}'\otimes \tilde{p})c))c'\\
	& = & ((\tilde{M}'\otimes_{A}\varphi)(\tilde{m}'\otimes \tilde{p}c))c'\\
	& = & (\tilde{m}'\otimes \varphi(\tilde{p}c))c'\\
	& = & \tilde{m}'\otimes \varphi(\tilde{p}c)c'.
	\end{array}$$
	Henceforth $\tilde{M}'\otimes_{A}(c\varphi c')=c(\tilde{M}'\otimes_{A}\varphi)c'$, thus the morphism $\tilde{M}'\otimes_{A}-$ (see (\ref{eqn:morphism_2})) is a $(\C,\C)$-bimodule homomorphism.
	\par Second, we will prove that the morphism $\tilde{M}'\otimes_{A}-$
	%(see (\ref{eqn:morphism_2}))
	 is a $G$-graded $(\C,\C)$-bimodule homomorphism, i.e. it is grade preserving. Consider $g\in G$ and $\varphi_g \in \Hom_A(\tilde{P},\tilde{Q})_g$. We want $\tilde{M}'\otimes_{A}\varphi_g\in \Hom_{A'}(\tilde{M}'\otimes_{A}\tilde{P},\tilde{M}'\otimes_{A}\tilde{Q})_g$, i.e.  by \cite[\S 1.2]{book:Marcus1999}, if for some $h\in G$ and $\tilde{m}'\otimes \tilde{p}\in (\tilde{M}'\otimes_{A}\tilde{P})_h$, then we must have $(\tilde{M}'\otimes_{A}\varphi_g)(\tilde{m}'\otimes \tilde{p})\in (\tilde{M}'\otimes_{A}\tilde{Q})_{hg}$. Beforehand, because $\tilde{m}'\otimes \tilde{p}\in (\tilde{M}'\otimes_{A}\tilde{P})_h$, by \cite[\S 1.6.4]{book:Marcus1999}, there exists some $x,y\in G$ with $h=xy$ such that $\tilde{m}'\in \tilde{M}'_{x}$ and $\tilde{p}\in \tilde{P}_{y}$. We have:
	$$
	\begin{array}{rcl}
		(\tilde{M}'\otimes_{A}\varphi_g)(\tilde{m}'\otimes \tilde{p}) & = &  \tilde{m}'\otimes \varphi_g(\tilde{p}) \in  \tilde{M}'_{x}\otimes_A \varphi_g(\tilde{P}_{y})\\
		& \subseteq & \tilde{M}'_{x}\otimes_A \tilde{Q}_{yg} \subseteq  (\tilde{M}'\otimes_A \tilde{Q})_{xyg}\\
		& = & (\tilde{M}'\otimes_A \tilde{Q})_{hg}.
	\end{array}
	$$
Henceforth, the morphism $\tilde{M}'\otimes_{A}-: \Hom_A(\tilde{P},\tilde{Q})\to \Hom_{A'}(\tilde{M}'\otimes_{A}\tilde{P},\tilde{M}'\otimes_{A}\tilde{Q})$ is a $\G$-graded $(\C,\C)$-bimodule homomorphism.
\end{proof}

 By Proposition \ref{prop:preequiv_example} and the observations made in Section \S\ref{s:graded_equivalence_data_over_C}, the following corollary is straightforward.

\begin{corollary}
	%Let $A$ be a strongly $\G$-graded $\O$-algebra over $\C$.
	Let $P$ be a $\G$-invariant $\G$-graded $A$-module and $A'=\mathrm{End}_A(P)^{\mathrm{op}}$. If $\tensor[_{A}]{P}{}$ is a progenerator, then $P\otimes_{A'}-$ is a $\G$-graded Morita equivalence over $\C$ between $A'\text{-}\Gr$ and $A\text{-}\Gr$, with $P^{\ast}\otimes_{A}-$ as its inverse.
\end{corollary}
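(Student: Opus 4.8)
The plan is to read the statement off from the machinery already assembled in Sections \S\ref{s:graded_equivalence_data_over_C} and \S\ref{s:Morita_theorems}, so that essentially no new computation is required; the work has all been done in the supporting results, and what remains is to chain them together with the correct identifications. First I would invoke Proposition \ref{prop:preequiv_example}: since $A$ is a strongly $\G$-graded $\O$-algebra over $\C$ and $P$ is a $\G$-invariant $\G$-graded $A$-module with $A'=\mathrm{End}_A(P)^{\mathrm{op}}$, the sextuple $(A,A',P,P^{\ast},(\cdot,\cdot),[\cdot,\cdot])$ is a $\G$-graded Morita context over $\C$, where $P^{\ast}=\Hom_A(P,A)$ is the $A$-dual.

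Next I would upgrade this context to a surjective one using the progenerator hypothesis. By the observation recorded immediately after Proposition \ref{prop:preequiv_example}, which rests on Corollary 12.8 of \cite{book:Faith1973}, the context $(A,A',P,P^{\ast},(\cdot,\cdot),[\cdot,\cdot])$ is a surjective $\G$-graded Morita context over $\C$ precisely because $\tensor[_{A}]{P}{}$ is a progenerator; equivalently, the evaluation map $(\cdot,\cdot)$ and the map $[\cdot,\cdot]$ are both isomorphisms exactly under this hypothesis. This is the only place where the progenerator assumption enters the argument.

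Finally I would apply the preceding theorem (Graded Morita I over $\C$) to this surjective context, matching the roles $\tilde{M}=P$, viewed as an $(A,A')$-bimodule over $\C$, and $\tilde{M}'=P^{\ast}$, viewed as an $(A',A)$-bimodule over $\C$. The theorem then yields directly that
$$
P\otimes_{A'}-:A'\text{-}\Gr\to A\text{-}\Gr
\qquad\text{and}\qquad
P^{\ast}\otimes_{A}-:A\text{-}\Gr\to A'\text{-}\Gr
$$
are inverse $\G$-graded equivalences over $\C$, which is exactly the assertion. The only point demanding care, and the nearest thing to an obstacle, is the bookkeeping of the direction of the two functors and the sides of the bimodule actions, so that it is $P$ (rather than $P^{\ast}$) that induces the functor $A'\text{-}\Gr\to A\text{-}\Gr$; once the identification $\tilde{M}=P$, $\tilde{M}'=P^{\ast}$ is fixed, the conclusion is immediate.
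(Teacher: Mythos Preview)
Your proposal is correct and follows exactly the route the paper intends: invoke Proposition~\ref{prop:preequiv_example} to obtain the $\G$-graded Morita context over $\C$, use the remark based on \cite[Corollary 12.8]{book:Faith1973} to make it surjective via the progenerator hypothesis, and then apply Graded Morita I over $\C$ with $\tilde{M}=P$, $\tilde{M}'=P^{\ast}$. The paper simply declares the corollary ``straightforward'' from these ingredients, so you have merely spelled out what it leaves implicit.
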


\begin{theorem}[Graded Morita II over $\C$]
Assume that $A$ and $A'$ are $\G$-graded Morita equivalent over $\C$ and let
$$
\xymatrix@C+=3cm{
A\text{-}\Gr \ar@<+.5ex>[r]^{\tilde{\mathcal{F}}} & A'\text{-}\Gr \ar@<+.5ex>[l]^{\tilde{\mathcal{G}}}
}
$$
be inverse $\G$-graded equivalences over $\C$. Then this equivalence is given by the following $\G$-graded bimodules over $\C$: $P=\tilde{\mathcal{F}}(A)$ and $Q=\tilde{\mathcal{G}}(A')$. More exactly, $P$ is a $\G$-graded $(A',A)$-bimodule over $\C$, $Q$ is a $\G$-graded $(A,A')$-bimodule over $\C$ and the following natural equivalences of functors hold:
\[\tilde{\mathcal{F}} \simeq P\otimes_{A}- \quad\text{ and }\quad \tilde{\mathcal{G}} \simeq Q\otimes_{A'}-.\]
\end{theorem}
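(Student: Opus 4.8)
The plan is to reduce everything to the classical Morita II theorem together with a single compatibility check that comes from the hypothesis that $\tilde{\mathcal{F}}$ is a functor \emph{over} $\C$. Set $P=\tilde{\mathcal{F}}(A)$ and $Q=\tilde{\mathcal{G}}(A')$. Since $\tilde{\mathcal{F}},\tilde{\mathcal{G}}$ are inverse $\G$-graded equivalences, the classical Morita II theorem (\cite[\S12]{book:Faith1973}) together with its graded refinement (\cite{article:delRio1991}, \cite{article:Boisen1994}) already yields that $P$ is a $\G$-graded $(A',A)$-bimodule, that $Q$ is a $\G$-graded $(A,A')$-bimodule, and that $\tilde{\mathcal{F}}\simeq P\otimes_A-$ and $\tilde{\mathcal{G}}\simeq Q\otimes_{A'}-$. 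First I would recall how the right $A$-action on $P$ arises, since this is the structure I must control: right multiplication identifies $A$ with $\End_A(A)^{\op}$ as $\G$-graded algebras, and applying the functorial map (\ref{eqn:morphism}) with $\tilde{P}=\tilde{Q}=A$ gives a graded ring homomorphism $\End_A(A)^{\op}\to\End_{A'}(P)^{\op}$; the composite $A\cong\End_A(A)^{\op}\to\End_{A'}(P)^{\op}$ is precisely the right $A$-action, and it is degree preserving because $\tilde{\mathcal{F}}$ is a graded functor. Thus the only genuinely new content is to show that $P$ and $Q$ are bimodules \emph{over} $\C$, i.e. that they satisfy condition (3$'$) following Definition \ref{def:graded_bimodules_over_c}.

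Before the main check I would record the needed invariance. The regular module $A$ is $\G$-invariant, since for the crossed product $A$ right multiplication by the invertible element $u_{\g}$ furnishes a graded $A$-linear isomorphism $A\cong A(\g)$ for every $\g\in\G$, so $G_A=G$. Hence by Lemma \ref{remark:same_stabilizer} the $A'$-module $P=\tilde{\mathcal{F}}(A)$ is $\G$-invariant, and symmetrically so is $Q$. Consequently Proposition \ref{prop:P_bimod_over_C}, applied with base algebra $A'$ and module $P$, applies: $\End_{A'}(P)^{\op}$ is a $\G$-graded $\O$-algebra over $\C$ and $P$ is a $\G$-graded $(A',\End_{A'}(P)^{\op})$-bimodule over $\C$. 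In particular the \emph{intrinsic} right $\C$-action on $P$, namely $p\star c:=\theta'(\zeta'(c))(p)$ where $\theta'$ is the structure map of Lemma \ref{lemma:zetaprimeq} for $A'$, satisfies condition (3$'$): $\zeta'(c)\,p=p\star c$ for every $p\in P_1$ and $c\in\C$.

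The heart of the argument is to identify this intrinsic action with the right $A$-action transported by $\tilde{\mathcal{F}}$, and this is where the hypothesis ``over $\C$'' is used exactly once. Writing $\rho_a\in\End_A(A)$ for right multiplication by $a$, the transported action is $p\cdot a=\tilde{\mathcal{F}}(\rho_a)(p)$, so for $a=\zeta(c)$ I must compare $\tilde{\mathcal{F}}(\rho_{\zeta(c)})$ with $\theta'(\zeta'(c))$ in $\End_{A'}(P)$. Taking $\tilde{P}=\tilde{Q}=A$ in (\ref{eqn:morphism}) and computing the $(\C,\C)$-bimodule action on $\End_A(A)=\Hom_A(A,A)$ (as in the Graded Morita I over $\C$ proof, acting on the left through the source and on the right through the target), one finds $c\cdot\mathrm{id}_A=\mathrm{id}_A\cdot c=\rho_{\zeta(c)}$. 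Since (\ref{eqn:morphism}) is a homomorphism of $\G$-graded $(\C,\C)$-bimodules and sends $\mathrm{id}_A\mapsto\mathrm{id}_P$, applying it gives $\tilde{\mathcal{F}}(\rho_{\zeta(c)})=c\cdot\mathrm{id}_P=\mathrm{id}_P\cdot c=\theta'(\zeta'(c))$, that is $p\cdot\zeta(c)=p\star c$ for all $p\in P$. Combining with the preceding paragraph yields $\zeta'(c)\,p=p\cdot\zeta(c)$ for $p\in P_1$, which is exactly condition (3$'$) for $P$ as an $(A',A)$-bimodule over $\C$. Hence $P$ is a $\G$-graded $(A',A)$-bimodule over $\C$, and running the same computation with the roles of $(A,\tilde{\mathcal{F}})$ and $(A',\tilde{\mathcal{G}})$ interchanged shows $Q$ is a $\G$-graded $(A,A')$-bimodule over $\C$; together with the natural isomorphisms from graded Morita II this completes the proof.

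I expect the main obstacle to be the bookkeeping in this last step: correctly matching the two a priori distinct right $\C$-module structures on $P$ --- the intrinsic one coming from Proposition \ref{prop:P_bimod_over_C} and the one transported through the equivalence --- and verifying that substituting $\mathrm{id}_A$ into the $(\C,\C)$-bimodule homomorphism property of (\ref{eqn:morphism}) really produces the element $\theta'(\zeta'(c))$ on the target side. One must also keep track that the identification $A\cong\End_A(A)^{\op}$ and the map (\ref{eqn:morphism}) preserve the gradings, but these points are routine once it is known that $\tilde{\mathcal{F}}$ is a graded functor.
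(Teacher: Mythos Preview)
Your proposal is correct and follows essentially the same route as the paper: both invoke the Gordon--Green/del R\'io graded Morita II theorem for the bimodule structure and functorial equivalences, use Lemma \ref{remark:same_stabilizer} to transfer $G$-invariance from $A$ to $P$, apply Proposition \ref{prop:P_bimod_over_C} to obtain the intrinsic ``over $\C$'' structure on $P$ via $\End_{A'}(P)^{\op}$, and then use the $(\C,\C)$-bimodule property of $\tilde{\mathcal{F}}$ on $\Hom_A(A,A)$ evaluated at $\mathrm{id}_A$ to identify the two right $\C$-actions (your identity $\tilde{\mathcal{F}}(\rho_{\zeta(c)})=\theta'(\zeta'(c))$ is exactly the paper's $\alpha\circ\zeta=\zeta''$). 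The only cosmetic difference is that you verify condition (3$'$) on $P_1$ while the paper checks the equivalent condition (3) on arbitrary $P_g$.
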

\begin{proof}
By \cite[Corollary 10 (Gordon-Green)]{article:delRio1991}, we know that $P=\tilde{\mathcal{F}}(A)$ is a $\G$-graded $(A',A)$-bimodule, $Q=\tilde{\mathcal{G}}(A')$ is a $\G$-graded $(A,A')$-bimodule and that the following natural equivalences of functors hold: $\tilde{\mathcal{F}} \simeq P\otimes_{A}- $ and $\tilde{\mathcal{G}} \simeq Q\otimes_{A'}-.$ Moreover, we have that $\tensor[_{A}]{P}{}$ is a progenerator.
\par It remains to prove that $P$ and $Q$ are $\G$-graded bimodules over $\C$. We will only prove that $P$ is $\G$-graded bimodule over $\C$, because for $Q$ the reasoning is similar.
\par By the hypothesis, $A$ and $A'$ are $\G$-graded Morita equivalent over $\C$, hence $\tilde{\mathcal{F}}$ and $\tilde{\mathcal{G}}$ are over $\C$. Therefore the function:
$$
\xymatrix@C+=3cm{
\Hom_{A}(\tensor[_{A}]{A}{},\tensor[_{A}]{A}{}) \ar[r]^{\mathcal{\tilde{F}}}& \Hom_{A'}(\tensor[_{A'}]{P}{},\tensor[_{A'}]{P}{})
}$$
is an isomorphism of $\G$-graded $(\C,\C)$-bimodules, where if $f\in \Hom_{A}(\tensor[_{A}]{A}{},\tensor[_{A}]{A}{})$, we have that $(c_1 f c_2)(a)=f(ac_1)c_2$, for all $a\in A$, $c_1,c_2\in \C$. This means that the function
\[\alpha:A\to\End_{A'}(P)^{\op},\quad \alpha(a)=\F(\rho(a)),\,\text{for all }a\in A,\]
(where $\rho(c):a\mapsto ac$, for all $a\in A$) is an isomorphism of $\G$-graded $(\C,\C)$-bimodules. Moreover, by the bimodule structure definition of $P$ (see \cite{book:Anderson1992}), we have that $\alpha(a)(p)=pa$ for all $a\in A$ and for all $p\in P$.
\par It is clear that $\tensor[_{A}]{A}{}$ is $G$-invariant, hence by Lemma \ref{remark:same_stabilizer}, $P$ is also $G$-invariant. Henceforth, by Proposition \ref{prop:P_bimod_over_C}, $P$ is a $G$-graded $(A',\End_{A'}(P)^{\op})$-bimodule over $\C$. Consider the structural homomorphisms $\zeta: \C\to A$,  $\zeta': \C\to A'$ and $\zeta'': \mathcal{C}\to\End_{A'}(P)^{\op}$, thus for all $a\in A$ and for all $c_1,c_2\in\C$ we have:
\[\alpha(\zeta(c_1)a\zeta(c_2))=\zeta''(c_1)\alpha(a)\zeta''(c_2).\]
By taking $a=1_A$ and $c_2=1_{\C}$ we obtain $\alpha\circ \zeta=\zeta''$.
\par Let $g\in G$, $p_{g}\in P_{g}$ and $c\in \C$. We want $p_g\cdot c = \tensor[^{g}]{c}{}\cdot p_{g}$. We have:
\[
p_g\cdot c  =  p_g\cdot \alpha(\zeta(c)) =  p_g\cdot \zeta''(c) = \tensor[^{g}]{\zeta'(c)}{}p_g =  \tensor[^{g}]{c}{}p_g.
\]
Henceforth the statement is proved.
\end{proof}

\section{Conclusion} \label{s:Conclusion}
We have developed a $G$-graded Morita theory over a $G$-graded $G$-acted algebra for the case of finite groups.
\par In \coref{s:graded_bimodules_over_C} we recalled from \cite{MM2} the notions of a $G$-graded $G$-acted algebra, of a $G$-graded algebra over a $G$-graded $G$-acted algebra and that of a $G$-graded bimodule over a $G$-graded $G$-acted algebra and we gave some useful examples for each notion.
\par In \coref{s:graded_equivalence_data_over_C} we introduced the notion of a $G$-graded Morita context over a $G$-graded $G$-acted algebra and gave a standard example.
\par The main results are in \coref{s:Morita_theorems} where a notion of graded functors over $G$-graded $G$-acted algebras and of graded Morita equivalences over $G$-graded $G$-acted algebras are introduced and two Morita-type theorems are proved using these notions: 
We proved that by taking a $G$-graded bimodule over a $G$-graded $G$-acted algebra we obtain a $G$-graded Morita equivalence over the said $G$-graded $G$-acted algebra and that by being given a $G$-graded Morita equivalence over a $G$-graded $G$-acted algebra, we obtain a $G$-graded bimodule over the said $G$-graded $G$-acted algebra, which induces the given $G$-graded Morita equivalence.

\phantomsection


\begin{thebibliography}{1}
\addcontentsline{toc}{section}{References}

\bibitem{book:Anderson1992} F. W. Anderson, K. R. Fuller, \emph{Rings and Categories of Modules}, Graduate Texts in Mathematics, Vol. 13, 2nd Ed., Springer-Verlag, Berlin-Heidelberg-New York (1992), 55-265;

\bibitem{article:Boisen1994} P. Boisen, \emph{Graded Morita Theory}, Journal of Algebra, \textbf{164} (1994), 1--25;

\bibitem{article:Dade1980} E. C. Dade, \emph{Group-Graded Rings and Modules}, Math. Z., \textbf{174} (1980), 241--262;

\bibitem{article:delRio1991} A. del R\'io, \emph{Graded rings and equivalences of categories}, Communications in Algebra, \textbf{19(3)} (1991), 997-1012;

\bibitem{book:Faith1973}  C. Faith, \emph{Algebra: Rings, Modules and Categories I}, Springer-Verlag, Berlin-Heidelberg-New York (1973), 443--453;

\bibitem{article:GG1980} R. Gordon, E. L. Green, \emph{Graded Artin Algebras}, J. of Algebra, \textbf{76} (1980), 241-262;

\bibitem{book:Marcus1999}  A. Marcus, \emph{Representation theory of group-graded algebras}, Nova Science (1999), 22--162;

\bibitem{MM1} A. Marcus, V. A. Minu\cb{t}\u{a}, \textit{Group graded endomorphism algebras and Morita equivalences}, \href{https://arxiv.org/abs/1911.04590}{arXiv:1911.04590}, preprint (2019);

\bibitem{MM2} A. Marcus, V. A. Minu\cb{t}\u{a}, \textit{Character triples and group graded equivalences}, \href{https://arxiv.org/abs/1912.05666}{arXiv:1912.05666}, preprint (2019);

\bibitem{article:MN1988} C. Menini, C. N\u{a}st\u{a}sescu, \emph{When is R-gr Equivalent to the Category of Modules?}, J. of Pure and Appl. Algebra, \textbf{51} (1988), 277-291;

\bibitem{ch:Spath2018}  B. Sp\"ath, \emph{Reduction theorems for some global-local conjectures},  Local Representation Theory and Simple Groups, European Mathematical Society (2018), 23--61.
\end{thebibliography}
\end{document}